 \newtheorem{Thm}{Theorem}[section]
 \newtheorem{Lem}[Thm]{Lemma}
 \newtheorem{Prop}[Thm]{Proposition}
\theoremstyle{remark}
 \newtheorem{Expl}[Thm]{Example}
 \newtheorem{Rem}[Thm]{Remark}
\theoremstyle{definition}
\numberwithin{equation}{section}
\newcommand\N{\mathbb N}
\newcommand\CC{\mathbb C}
\newcommand\ol[1]{\overline{#1}}
\newcommand\disjun{\bigsqcup}
\newcommand\adhit{\triangleright}
\newcommand\twisthit{\tilde\triangleright}
\newcommand\inv{^{-1}}
\def\HM#1.#2.#3.#4.{{^{#1}_{#3}\mathcal M^{#2}_{#4}}}
\newcommand\lYD[1]{{^{#1}_{#1}\mathcal{YD}}}
\newcommand\ot{\otimes}
\newcommand{\ou}[1]{\underset{{#1}}{\otimes}}
\newcommand\Res{\operatorname{Res}}
\newcommand\Ind{\operatorname{Ind}}
\newcommand\Stab{\operatorname{Stab}}
\newcommand\Rep{\operatorname{Rep}}
\newcommand\Irr{\operatorname{Irr}}
\newcommand\Vect{\operatorname{Vect}}
\newcommand\Tr{\operatorname{Tr}}
\newcommand\lquot{\backslash}
\newcommand\Hom{\operatorname{Hom}}
\newcommand\Indtobim{\mathcal F}
\newcommand\IndtoYD{\mathcal G}
\newcommand\underlie{\mathcal U}
\newcommand\C{\mathcal C}
\newcommand\CTR{\mathcal Z}
\newcommand\RCTR{\ol{\mathcal Z}}
\newcommand\RADJ{\ol{\mathcal K}}
\begin{document}
\title[Frobenius-Schur indicators for group inclusions]{Computing Higher Frobenius-Schur Indicators in Fusion Categories Constructed from Inclusions of Finite Groups}
\author{Peter Schauenburg}
\address{Institut de Math{\'e}matiques de Bourgogne --- UMR 5584 CNRS\\
Universit{\'e} de Bourgogne\\
Facult{\'e} des Sciences Mirande\\
9 avenue Alain Savary\\
BP 47870 21078 Dijon Cedex\\
France
}
\email{peter.schauenburg@u-bourgogne.fr}
\subjclass[2010]{18D10,16T05,20C15}
\keywords{Fusion category, Frobenius-Schur indicator}
\thanks{Research partially supported through a FABER Grant by the \emph{Conseil régional de Bourgogne}}
\begin{abstract}
We consider a subclass of the class of group-theoretical fusion categories: To every finite group $G$ and subgroup $H$ one can associate the category of $G$-graded vector spaces with a two-sided $H$-action compatible with the grading. We derive a formula that computes higher Frobenius-Schur indicators for the objects in such a category using the combinatorics and representation theory of the groups involved in their construction. We calculate some explicit examples for inclusions of symmetric groups.
\end{abstract}
\maketitle

\section{Introduction}
\label{sec:introduction}

Higher Frobenius-Schur indicators are invariants of an object in a pivotal fusion category (and hence also invariants of that category). They generalize, to higher degrees and more general objects, the degree two Frobenius-Schur indicator defined for a representation of a finite group by its namesakes in 1906. Categorical versions of degree two indicators were studied by Bantay \cite{Ban:FSICFT} and Fuchs-Ganchev-Szlachányi-Vescernyés \cite{FucGanSzlVes:FSIRMCC}, indicators for modules over semisimple Hopf algebras were introduced by Linchenko-Montgomery \cite{LinMon:FSTHA} and studied in depth by Kashina-Sommerhäuser-Zhu \cite{KasSomZhu:HFSI}. The degree two indicators for modules over semisimple quasi-Hopf algebras were treated by Mason-Ng \cite{MasNg:CIFSISQHA}. The higher indicators for pivotal fusion categories that we deal with in the present paper were introduced in \cite{NgSch:CIHISQHA,NgSch:HFSIPC,NgSch:FSIESC}.

Frobenius-Schur indicators have become a tool for the structure theory and classification of fusion categories. The problem we deal with here, however, is simply how to calculate them in very specific examples. More concretely we will deal with a specific class of group-theoretical fusion categories \cite{MR1976233,MR2183279}. Degree two indicators for Hopf algebras associated with such categories have been studied in \cite{MR1919158,MR2471448}. In \cite{KasSomZhu:HFSI} formulas for higher indicators of smash product Hopf algebras associated to a group acting by automorphisms on another group were given. This class of examples includes the Drinfeld double of a finite group. For such doubles, the explicit formulas were used to study the question of integrality of the indicators in \cite{2012arXiv1208.4153I}. Extensive computer calculations, in particular with a view on the question whether the indicators of the doubles of symmetric groups are positive, were conducted in \cite{MR3103664}, examples for certain other groups can be found in \cite{MR3178056,MR2925444}.

Natale \cite{Nat:FSICFC} has derived formulas for the degree two Frobenius-Schur indicators of the objects in general group-theoretical fusion categories. Her approach is based on the fact that a group-theoretical fusion category can be written as the module category over a quasi-Hopf algebra which is known explicitly. Then the explicit definition of degree two indicators of modules over quasi-Hopf algebras in \cite{MasNg:CIFSISQHA} can be applied. 

In principle the same approach, using now the higher indicator formula for quasi-Hopf algebras from \cite{NgSch:CIHISQHA}, could be used to obtain higher indicator formulas for group-theoretical categories. However, those formulas involve iterated applications of the associator elements of the relevant quasi-Hopf algebra dealing with the parentheses of iterated tensor products in the category. Applying them with the explicit quasi-Hopf structure deriving from the data of a group-theoretical fusion category seems a formidable task.

We will take an entirely different approach. The formula from \cite[Thm.~4.1.]{NgSch:FSIESC}, generalizing the ``third formula'' from \cite{KasSomZhu:HFSI}, links higher Frobenius-Schur indicators in a spherical fusion category $\mathcal C$ to the ribbon structure of the Drinfeld center $\CTR(\mathcal C)$ and the functor from $\mathcal C$ to $\CTR(\mathcal C)$ adjoint to the underlying functor. The ``third formula'' was used in \cite{MR2774703} to calculate indicators in Tambara-Yamagami categories; in our context the approach is aided by the fact that the centers of group-theoretical fusion categories are easy to determine: A group-theoretical fusion category is the monoidal category of bimodules over the (twisted) group algebra of a subgroup $H$ of a finite group $G$ inside the category $\Vect_G$ of $G$-graded vector spaces (twisted by a three-cocycle on $G$). By \cite{MR1822847}, the Drinfeld center of such a bimodule category is equivalent to the Drinfeld center of the ``ambient'' category. In different language this means that group-theoretical fusion categories are Morita equivalent to the category of graded vector spaces with twisted associativity; see the survey \cite{MR3077244}. We will treat the case of a group-theoretical fusion category defined without cocycles. Thus $\C=\HM G..H.H.$, the center is $\CTR(\HM G..H.H.)=\CTR(\Vect_G)$, equivalent to the category of modules over the Drinfeld double of $G$.

In a sense the underlying functor $\CTR(\Vect_G)\to\HM G..H.H.$ is already known explicitly from \cite{MR1822847}, but we need to do more. Simple objects in $\HM G..H.H.$ are parametrized by group-theoretical data, namely (equivalence classes of) pairs consisting of an element of $G$ and an irreducible representation of a certain stabilizer subgroup of $H$. Simple objects of $\CTR(\Vect_G)$ are also classified by group-theoretical data, (equivalence classes of) pairs consisting of an element of $G$ and an irreducible representation of its centralizer. In \cref{sec:center-adjoint} we will describe the underlying functor $\CTR(\Vect_G)\to\HM G..H.H.$ on the level of simple objects by a formula involving only the combinatorics and representation theory of subgroups of $G$. Given this description one can turn things around and describe the adjoint functor $\HM G..H.H.\to\CTR(\Vect_G)$ equally explicitly. Admittedly the resulting description, while completely explicit and entirely on the level of groups, subgroups, and group representations, is quite unwieldy --- this is perhaps natural, since one has to deal with how conjugacy classes and centralizers (involved in the description of modules over the Drinfeld double) relate to double cosets of a chosen subgroup, and stabilizers of one-sided cosets under the regular action (involved in the description of $\HM G..H.H.$).

In \cref{sec:indic-form-group} we will use the description of the adjoint functor and the ``third formula'' to obtain a formula for the higher indicators of the simple objects of $\HM G..H.H.$. Luckily we do not need the entire information on the adjoint, but only the traces of the ribbon structure on the images under the adjoint. This allows to dramatically simplify the immediate result based on the complicated description of the adjoint to obtain a surprisingly simple-looking formula for the higher indicators. It is in fact even simpler than Natale's formula for second indicators, and uses only group characters and the combinatorics of group elements and subgroups, without mentioning the associated quasi-Hopf algebra and its characters at all. One should admit, though, that characters of the associated quasi-Hopf algebra are in turn described in more ``basic'' terms in \cite{Nat:FSICFC}. Also, our results are marred by the obvious flaw that they do not treat general group-theoretical categories, but only those in whose definition the relevant group cocycles are trivial --- we will amend this flaw in a future paper.

We also treat variants of the indicator formula that are more complicated, involving passing to orbits under the action of auxiliary subgroups, but computationally advantageous for the same reason that they pass from sums over the entire group $H$ to sums over certain orbits.

In \cref{sec:pedestrian-example} we will explicitly calculate indicators in several examples of fusion categories associated to an inclusion of symmetric groups $S_{n-2}\subset S_n$. We use the ``simple'' version of our indicator formula for the cases $n=4,5$. The cases $n=6,7$ illustrate how the more complicated versions reduce the size of the calculations needed down to a manageable size.
\section{Preliminaries}
\label{sec:preliminaries}
Throughout the paper, $G$ is a finite group, and $H\subset G$ a subgroup. We denote the adjoint action of $G$ on itself by $x\adhit g=xgx\inv$. If $V$ is a representation of a subgroup $K\subset G$, and $x\in G$, we denote by $x\adhit V$ the twisted representation of $x\adhit K$ with the same underlying vector space $V$ on which $y\in x\adhit K$ acts like $x\inv\adhit y\in K$.

We work over the field $\CC$ of complex numbers, representations are complex representations, and characters ordinary characters. 

The category $\HM G..H.H.:=\HM \CC G..\CC H.\CC H.$ is defined as the category of $\CC H$-bimodules over the group algebra of $H$, considered as an algebra in the category of $\CC G$-comodules, that is, of $G$-graded vector spaces. Thus, an object of $\HM G..H.H.$ is a $G$-graded vector space $M\in\Vect_G$ with a two-sided $H$-action compatible with the grading in the sense that $|hmk|=h|m|k$ for $h,k\in H$ and $m\in M$.

The category $\HM G..H.H.$ is a fusion category. The tensor product is the tensor product of $\CC H$-bimodules. Simple objects are parametrized by irreducible representations of the stabilizers of right cosets of $H$ in $G$. More precisely, let $D\in H\lquot G/H$ be a double coset of $H$ in $G$, let $d\in D$, and let $S=\Stab_H(dH)=H\cap (d\adhit H)$ be the stabilizer in $H$ of the right coset $dH$ under the action of $H$ on its right cosets in $G$. Then the subcategory $\HM D..H.H.\subset \HM G..H.H.$ defined to contain those objects the degrees of all of whose homogeneous elements lie in $D$ is equivalent to the category $\Rep(S)$ of representations of $S$. The equivalence $\HM D..H.H.\to\Rep(S)$ takes $M$ to $(M_{dH})/H\cong(M/H)_{dH/H}$, the space of those vectors in the quotient of $M$ by the right action of $H$ whose degree lies in the right coset of $d$. Details are in \cite{Zhu:HARRHA,Sch:HAEMC}. We will denote the inverse equivalence by $\Indtobim_d\colon\Rep(\Stab_H(dH))\to\HM HdH..H.H.$, so that we have a category equivalence 
\begin{equation*}
  \bigoplus_d\Rep(\Stab_H(dH))\xrightarrow{(\Indtobim_d)_d}\HM G..H.H.
\end{equation*}
in which the sum runs over a set of representatives of the double cosets of $H$ in $G$. Of course $\HM D..H.H.$ can be described by choosing a different representative of $D$. If $h\in H$, then $dh$ has the same right coset as $d$, and $\Indtobim_{dh}=\Indtobim_{d}$, while $\Stab_H(hdH)=h\adhit\Stab_H(dH)$ and $\Indtobim_d(W)=\Indtobim_{hd}(h\adhit W)$ for $W\in\Rep(\Stab_H(dh))$.

In the special case $H=G$ the above description, with the neutral element representing the sole class of $G$ in $G$, amounts to the (well-known) equivalence $\Rep(G)\cong\HM G..G.G.$ sending $V\in\Rep(G)$ to $V\ot \CC G$ with the regular right $G$-action and the diagonal left $G$-action. This is a monoidal category equivalence.

The category $\lYD G=\lYD{\CC G}$ of (left-left) Yetter-Drinfeld modules over $\CC G$ has objects the $G$-graded vector spaces with a left $G$-action compatible with the grading in the sense that $|gv|=g|v|g\inv$ for $g\in G$ and $v\in V\in\lYD G$. The category $\lYD G$ is the (right) center of the category $\HM G....$ of $G$-graded vector spaces: The half-braiding $c\colon U\ot V\to V\ot U$ between a graded vector space $U$ and a Yetter-Drinfeld module $V$ is given by $u\ot v\mapsto |u|v\ot u$. To calculate indicators using the ``third formula'' we also need the fact that the canonical pivotal structure of $\lYD G$ is given by the ordinary vector space isomorphism $V\to V^{**}$, so that pivotal trace and ordinary trace coincide. Finally the ribbon automorphism $\theta$ of an object $V\in\lYD G$ is given by $\theta(v)=|v|v$.

 Simple objects of $\lYD G$ are parametrized by irreducible representations of the centralizers in $G$ of elements of $G$. (In fact this can be viewed as a special case of the description of graded bimodules above, as we shall review in \cref{caseofdouble} below). More precisely, let $g\in G$ and $C_G(g)$ the centralizer of $g$ in $G$. Then a functor 
 \begin{equation*}
   \IndtoYD_g\colon\Rep(C_G(g))\to \lYD G
 \end{equation*}
can be defined by sending $V\in\Rep(C_G(g))$ to the $\CC G$-module $\Ind_{C_G(g)}^GV=\CC G\ou{\CC C_G(g)}V$ endowed with the grading given by $|x\ot v|=xgx\inv$ for $x\in G$ and $v\in V$. We note the special case $g=1$ which recovers the canonical (monoidal) inclusion functor $\Rep(G)\to\lYD G$. Summing over different elements we obtain a category equivalence
\begin{equation*}
  \bigoplus_g\Rep(C_G(g))\xrightarrow{(\IndtoYD_g)_g}\lYD G.
\end{equation*}
The sum runs over a set of representatives of the conjugacy classes of $G$, and the image of the functor $\IndtoYD_g$ consists of those Yetter-Drinfeld modules the degrees of whose homogeneous elements lie in the conjugacy class of $g$. We note for later use that the ribbon automorphism of $\IndtoYD_g(V)$ is $\theta(x\ot v)=(x\adhit g)(x\ot v)=xg\ot v=x\ot gv$; the trace of $\theta^m$ is therefore $[G\colon C_G(g)]\chi(g^m)$ if $\chi$ denotes the character of $V$.

As a final piece of notation, we will write $\langle M,N\rangle:=\dim_\CC (\Hom_{\C}(M,N))$ for objects $M,N$ in a semisimple category.
\section{The center and the adjoint}
\label{sec:center-adjoint}

By a result of Müger \cite{MR1966525} the Drinfeld center $\CTR(\C)$ of a pivotal fusion category $\C$ is a modular category, and the underlying functor $\CTR(\C)\to\C$ has a two-sided adjoint $\mathcal K$. To handle the center of $\HM G..H.H.$ and the adjoint functor $\mathcal K$ we use the fact \cite{MR1822847} that the center of a category of bimodules in a tensor category $\C$ coincides, in many cases including the present one, with the center of $\C$ itself.

To be precise, we will use the ``right center'' $\RCTR(\C)$ whose objects are pairs $(V,c)$ in which $c\colon X\ot V\to V\ot X$ is a half-braiding defined for any $X\in\C$, and we denote by $\RADJ$ the adjoint functor of the underlying functor $\RCTR(\C)\to\C$.

Then, writing $\C=\HM G....=\Vect_G$ for the category of $G$-graded vector spaces, we have a category equivalence 
\begin{equation*}
  \lYD G\cong \RCTR(\C)\to\RCTR({_{\CC H}\C_{\CC H}})=\RCTR(\HM G..H.H.)
\end{equation*}
which sends $(N,c)\in\RCTR(\C)$ to an object of $\RCTR({_{\CC H}\C_{\CC H}})$ whose underlying right $\CC H$-module is $N\ot \CC H$, whose left $\CC H$-module structure is given by
\begin{equation*}
  \CC H\ot N\ot \CC H\xrightarrow{c\ot \CC H}N\ot \CC H\ot \CC H\xrightarrow{N\ot\nabla}N\ot \CC H
\end{equation*}
and whose half-braiding (which we do not need) is induced by the half-braiding of $N$.

Thus, we identify $\RCTR(\HM G..H.H.)=\lYD G$, and we identify the underlying functor $\RCTR(\HM G..H.H.)\to\HM G..H.H.$ with the functor 
\begin{equation*}
  \underlie\colon \lYD G\ni N\to N\ot \CC H\in\HM G..H.H.,
\end{equation*}
where the obvious right $\CC H$-module $N\ot \CC H$ has the left module structure $a(n\ot b)=an\ot ab$ and the grading $|n\ot b|=|n|b$.

Next, let $g\in G$, set $C:=C_G(g)$, and let $V\in\Rep(C)$. We consider 
\begin{equation*}
  \underlie\IndtoYD_g(V)=\CC G\ou{\CC C}V\ot \CC H\in\HM G..H.H..
\end{equation*}
Let $\mathfrak X_g$ be a set of representatives of the double cosets in $H\lquot G/C$, so that $G=\disjun_{x\in\mathfrak X_g}HxC$. Then each $\CC HxC\ou{\CC C}V\ot \CC H\subset \CC G\ou{\CC C}V\ot \CC H$ is a subobject in $\HM G..H.H.$, and we have
\begin{equation*}
  \underlie\IndtoYD_g(V)=\bigoplus_{x\in\mathfrak X_g}\CC HxC\ou{\CC C}V\ot \CC H.
\end{equation*}
Note that the degrees of the homogeneous elements of $\CC HxC\ou{\CC C}V\ot H$ lie in the double coset $H(x\adhit g)H$, so that $\CC HxC\ou{\CC C}V\ot \CC H$ is in the image of the functor $\Indtobim_{x\adhit g}$. To calculate the preimage, observe first that the degree of $hxc\ot v\ot h'\in \CC HxC\ou{\CC C}V\ot \CC H$ is $(hx\adhit g)h'$, and thus in $(x\adhit g)H$ iff $h\in\Stab_H((x\adhit g)H)=:J$. Hence
\begin{equation*}
  \CC HxC\ou{\CC C}V\ot \CC H=\Indtobim_{x\adhit g}(\CC JxC\ou{\CC C}V).
\end{equation*}
Next, observe that for $j,\tilde j\in J$ and $c,\tilde c\in C$ we have $jxc=\tilde jx\tilde c$ iff $\tilde j\inv j=x\adhit(\tilde cc\inv)$, which implies that we have an isomorphism
\begin{equation*}
  \CC JxC\ou{\CC C}V\ni jxc\ot v\mapsto j\ot cv\in \CC J\ou{\CC [J\cap(x\adhit C)]}(x\adhit V).
\end{equation*}
Note that $J\cap (x\adhit C)=\Stab_H((x\adhit g)H)\cap C_G(x\adhit g)=H\cap C_G(x\adhit g)=H\cap x\adhit C$.

We have shown:
\begin{equation*}
  \CC HxC\ou{\CC C}V\ot \CC H=\Indtobim_{x\adhit g}\left(\Ind_{H\cap(x\adhit C)}^{\Stab_H((x\adhit g)H)}\Res_{H\cap (x\adhit C))}^{x\adhit C}(x\adhit V)\right), 
\end{equation*}
whence
\begin{equation*}
  \underlie\IndtoYD_g(V)=\bigoplus_{x\in\mathfrak X_g}\Indtobim_{x\adhit g}\left(\Ind_{H\cap(x\adhit C)}^{\Stab_H((x\adhit g)H)}\Res_{H\cap (x\adhit C)}^{x\adhit C}(x\adhit V)\right).
\end{equation*}

Let $d\in G$ and $S=\Stab_H(dH)$. Let $\mathfrak H_d$ be a set of representatives of $H/S$. Thus the double coset $HdH$ is the disjoint union $HdH=\bigsqcup_{h\in \mathfrak H_d}hdH$, that is $\mathfrak H_dd$ is a set of representatives of the right cosets contained in $HdH$.

If $x\adhit g\in HdH$, then there is a unique $h\in\mathfrak H_d$ such that $(x\adhit g)H=hdH$, thus $\Stab_H((x\adhit g)H)=\Stab_H(hdH)=h\adhit S$, and for a representation $N$ of $\Stab_H((x\adhit g)H)$ we have $\Indtobim_{x\adhit g}N=\Indtobim_{hd}N=\Indtobim_d(h\inv\adhit N)$. Again $H\cap (x\adhit C)=(h\adhit S)\cap (x\adhit C)$. Thus
\begin{align*}\left(\underlie\IndtoYD_g(V)\right)_{HdH} &=\bigoplus_{\substack{x\in\mathfrak X_g\\h\in\mathfrak H_d\\x\adhit g\in hdH}}\Indtobim_{hd}\left(\Ind_{(h\adhit S)\cap(x\adhit C)}^{h\adhit S}\Res^{x\adhit C}_{(h\adhit S)\cap(x\adhit C)}(x\adhit V)\right)\\
&=\bigoplus_{\substack{x\in\mathfrak X_g\\h\in\mathfrak H_d\\x\adhit g\in hdH}}\Indtobim_d\left(h\inv\adhit\left(\Ind_{(h\adhit S)\cap(x\adhit C)}^{h\adhit S}\Res^{x\adhit C}_{(h\adhit S)\cap(x\adhit C)}(x\adhit V)\right)\right),
\end{align*}
and if $W\in\Irr(S)$, then 
\begin{multline*}
  \left\langle\underlie\IndtoYD_g(V),\Indtobim_d(W)\right\rangle
\\=\sum_{\substack{x\in\mathfrak X_g\\h\in\mathfrak H_d\\x\adhit g\in hdH}}\left\langle h\inv\adhit\left(\Ind_{(h\adhit S)\cap(x\adhit C)}^{h\adhit S}\Res^{x\adhit C}_{(h\adhit S)\cap(x\adhit C)}(x\adhit V)\right),W\right\rangle
\\=\sum_{\substack{x\in\mathfrak X_g\\h\in\mathfrak H_d\\x\adhit g\in hdH}}\left\langle \Ind_{(h\adhit S)\cap(x\adhit C)}^{h\adhit S}\Res^{x\adhit C}_{(h\adhit S)\cap(x\adhit C)}(x\adhit V),h\adhit W\right\rangle.
\end{multline*}
For the adjoint $\RADJ$ of $\underlie$ this implies, by Frobenius reciprocity:
\begin{multline*}
  \left\langle \RADJ\Indtobim_d(W),\IndtoYD_g(V)\right\rangle
\\=\sum_{\substack{x\in\mathfrak X_g\\h\in\mathfrak H_d\\x\adhit g\in hdH}}\left\langle x\inv\adhit\left(\Ind_{(h\adhit S)\cap(x\adhit C)}^{x\adhit C}\Res^{h\adhit S}_{(h\adhit S)\cap(x\adhit C)}(h\adhit W)\right),V\right\rangle.
\end{multline*}
This means that we have calculated a formula for the adjoint $\RADJ$: denoting by $\mathfrak C$ a system of representatives for the conjugacy classes of $G$, we have
\begin{align*}
  \RADJ&\Indtobim_d(W)\\&=\sum_{\substack{g\in\mathfrak C\\x\in\mathfrak X_g\\h\in\mathfrak H_d\\x\adhit g\in hdH}}\IndtoYD_g\left(x\inv\adhit\left(\Ind_{(h\adhit S)\cap(x\adhit C_G(g))}^{x\adhit C_G(g)}\Res^{h\adhit S}_{(h\adhit S)\cap(x\adhit C_G(g))}(h\adhit W)\right)\right)\\
&=\sum_{\substack{g\in\mathfrak C\\x\in\mathfrak X_g\\h\in\mathfrak H_d\\x\adhit g\in hdH}}\IndtoYD_{x\adhit g}\left(\Ind_{(h\adhit S)\cap(x\adhit C_G(g))}^{x\adhit C_G(g)}\Res^{h\adhit S}_{(h\adhit S)\cap(x\adhit C_G(g))}(h\adhit W)\right).
\end{align*}
While this is clearly not a particularly pleasant or practical formula, we can say something in its favor: It expresses the functor $\RADJ$ entirely in terms of the groups involved and their representations, using, of course, the translation of group representations to objects in the two categories involved via the functors $\Indtobim$ and $\IndtoYD$.
\section{Indicator formulas for group inclusions}
\label{sec:indic-form-group}

We retain the notations of the previous section, and proceed to calculate the higher Frobenius-Schur indicators of objects in $\HM G..H.H.$. This is based on the categorical version of the ``third formula'' in \cite{KasSomZhu:HFSI} that calculates indicators in a fusion category $\C$ through the adjoint $\RADJ$

The formula obtained above for the adjoint $\RADJ\colon \HM G..H.H.\to \lYD G$ yields, via \cite[Thm. 4.1]{NgSch:FSIESC}, a formula for the higher indicators of the simple objects of $\HM G..H.H.$. Since we are dealing with the right center, the relevant formula, \cite[Rem.~4.3]{NgSch:FSIESC}, is
\begin{equation*}
  \nu_m(X)=\frac{1}{|G|}\Tr(\theta_{\RADJ(X)}^{-m}).
\end{equation*}
We proceed to use the information available on $\RADJ$ to apply it.

First, let $\eta'$ be a character of $(h\adhit S)\cap(x\adhit C)$, and $\chi=\Ind_{(h\adhit S)\cap(x\adhit C)}^{x\adhit C}(\eta')$. Then by a standard formula for induced characters
\begin{align*}
  \chi(x\adhit g^m)&=\frac1{|(h\adhit S)\cap(x\adhit C)|}\sum_{\substack{y\in x\adhit C\\y\adhit x\adhit g^m\in h\adhit S}}\eta'(y\adhit x\adhit g^m)\\
&=
\begin{cases}
  [x\adhit C\colon (h\adhit S)\cap (x\adhit C)]\eta'(x\adhit g^m)&\text{if }x\adhit g^m\in h\adhit S\\
0&\text{otherwise,}
\end{cases}
\end{align*}
as elements in $x\adhit C$ commute with $x\adhit g^m$.

Let $\eta$ be the character of $W\in\Rep(S)$, and let $\chi$ be the character of $V:=\Ind_{h\adhit S\cap x\adhit C}^{x\adhit C}\Res^{h\adhit S}_{h\adhit S\cap x\adhit C}(h\adhit \eta)$. Then
\begin{align*}
  \Tr(\theta_{\IndtoYD_{x\adhit g}(V)}^m)
     &=[G\colon x\adhit C]\chi(x\adhit g^m)\\
     &=
     \begin{cases}
       [G\colon (h\adhit S)\cap(x\adhit C)]\eta(h\inv x\adhit g^m)&\text{if }x\adhit g^m\in h\adhit S\\
       0&\text{otherwise.}
     \end{cases}
\end{align*}
By the formula for $\RADJ(\Indtobim_d(W))$ obtained in the previous section, this finally implies (using $|(h\adhit S)\cap(x\adhit C_G(g))|=|S\cap (h\inv x\adhit C_G(g)|=|S\cap C_G(h\inv x\adhit g)|$)
\begin{align}
\label{eq:15}
  \nu_m(\Indtobim_d(W))
    &=\sum_{\substack{g\in\mathfrak C\\x\in\mathfrak X_g\\h\in\mathfrak H_d\\x\adhit g\in hdH\\x\adhit g^m\in h\adhit S}}\frac1{|S\cap C_G(h\inv x\adhit g)|}\ol\eta(h\inv x\adhit g^m).
\end{align}
Surely this sum is not pleasant to work with; it involves summing over all conjugacy classes of the group and all representatives of certain double cosets, as well as over the coset representatives in $\mathfrak H_d$, albeit that last sum involves either no summand (for many combinations of $g$ and $x$ we might have $x\adhit g\not\in HdH$), or just one summand (the representative of the unique right coset containing $x\adhit g$).

We shall process it further using the observation
\begin{equation}
  \label{eq:3}
  HdH=\bigsqcup_{\substack{g\in\mathfrak C\\x\in\mathfrak X_g\\x\adhit g\in HdH}}H\adhit(x\adhit g)
  =\bigsqcup_{\substack{g\in\mathfrak C\\x\in\mathfrak X_g\\h\in\mathfrak H_d\\x\adhit g\in hdH}}H\adhit (h\inv x\adhit g)
\end{equation}
For the first equality, one has to check when $x\adhit g$ and $y\adhit g$, for $x,y\in G$, are in the same orbit of the action of $H$ on $G$ by conjugation:
\begin{multline*}
  \exists h\in H\colon h\adhit(x\adhit g)=y\adhit g
\Leftrightarrow \exists h\in H\colon hxgx\inv h\inv=ygy\inv\\
\Leftrightarrow\exists h\in H\colon y\inv hx\in C_G(g)
\Leftrightarrow x\in HyC_G(g),
\end{multline*}
while the second is an obvious reparametrization.

Thus, the set
\begin{equation}
\label{eq:Rd}
  \mathfrak R_d=\{h\inv x\adhit g|g\in\mathfrak C,x\in\mathfrak X_g,h\in\mathfrak H_d,x\adhit g\in hdH\}
\end{equation}
is a set of representatives of the orbits of the action of $H$ on $HdH$ by conjugation. Moreover, $\mathfrak R_d\subset dH$. Thus, $\mathfrak R_d$ is a set of representatives of the orbits of the action of $S$ on $dH$ by conjugation. We have very nearly proved the main result of the paper:
\begin{Thm}\label{indicatorformula}
  Let $G$ be a finite group, $H\subset G$ a subgroup, $d\in G$, $S=\Stab_H(dH)$, $W\in\Rep(S)$ with character $\eta$, and $\Indtobim_d(W)$ the object of $\HM G..H.H.$ corresponding to $W$.
Then
\begin{equation}
  \label{eq:14}
  \nu_m(\Indtobim_d(W))=\frac1{|S|}\sum_{\substack{r\in dH\\r^m\in S}}\ol\eta(r^m)=\frac1{|S|}\sum_{\substack{h\in H\\(dh)^m\in S}}\ol\eta((dh)^m).
\end{equation}
\end{Thm}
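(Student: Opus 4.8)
The theorem is, at this point, a purely combinatorial repackaging of the sum \eqref{eq:15}, so the plan is simply to carry that repackaging out. The first step is the substitution $r:=h\inv x\adhit g$ in \eqref{eq:15}. Three observations make it go through cleanly: the coefficient in \eqref{eq:15} is already $|S\cap C_G(h\inv x\adhit g)|\inv=|S\cap C_G(r)|\inv$; the argument of $\ol\eta$ is $h\inv x\adhit g^m=(h\inv x\adhit g)^m=r^m$; and the side condition $x\adhit g^m\in h\adhit S$ becomes, after conjugating by $h\inv$, the condition $r^m\in S$. Hence the right-hand side of \eqref{eq:15} equals $\sum_{r}|S\cap C_G(r)|\inv\,\ol\eta(r^m)$, the sum taken over those $r$ in the set $\mathfrak R_d$ of \eqref{eq:Rd} that satisfy $r^m\in S$.

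Next I would use the fact recorded just before the theorem --- a consequence of \eqref{eq:3} --- that $\mathfrak R_d$, being contained in $dH$, is a system of representatives for the orbits of $S$ acting on $dH$ by conjugation. (If $r,r'\in dH$ and $r'=h\adhit r$ with $h\in H$, then writing $r=dk$ and $r'=dk'$ with $k,k'\in H$ and rearranging $dk'=h\,dk\,h\inv$ gives $d\inv h\inv d\in H$, hence $h\in H\cap(d\adhit H)=S$; so every $H$-conjugacy orbit on $HdH$ meets $dH$ in a single $S$-orbit.) For $r\in\mathfrak R_d$ the stabilizer of $r$ under the $S$-action is $S\cap C_G(r)$, so its orbit $O_r:=\{s\adhit r\mid s\in S\}$ has cardinality $|S|/|S\cap C_G(r)|$, and $dH=\bigsqcup_{r\in\mathfrak R_d}O_r$.

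It then remains to re-inflate the sum over orbit representatives into a sum over all of $dH$. For $r\in\mathfrak R_d$ and $r'=s\adhit r\in O_r$ we have $(r')^m=s\adhit r^m$, so $(r')^m\in S$ if and only if $r^m\in S$, and in that case $\ol\eta((r')^m)=\ol\eta(r^m)$ since $\eta$ is a class function on $S$ and $s\adhit r^m$ is $S$-conjugate to $r^m$. Consequently, whenever $r^m\in S$, $|S\cap C_G(r)|\inv\,\ol\eta(r^m)=|S|\inv\,|O_r|\,\ol\eta(r^m)=|S|\inv\sum_{r'\in O_r,\,(r')^m\in S}\ol\eta((r')^m)$; summing over $r\in\mathfrak R_d$ and using $dH=\bigsqcup_{r}O_r$ converts the reorganized right-hand side of \eqref{eq:15} into $|S|\inv\sum_{r\in dH,\,r^m\in S}\ol\eta(r^m)$, which is the middle expression in \eqref{eq:14}. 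The last expression in \eqref{eq:14} is then immediate from the bijection $H\to dH$, $h\mapsto dh$.

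I do not expect a genuine obstacle here: the substantial work --- identifying the center, computing $\RADJ$, the induced-character formula, and above all the combinatorial identity \eqref{eq:3} that recognizes $\mathfrak R_d$ as an $S$-orbit transversal of $dH$ --- is already done. The two points that merit (routine) care are verifying that both the summand and the side condition in \eqref{eq:15} depend on the triple $(g,x,h)$ only through $r=h\inv x\adhit g$, and the class-function invariance of $\eta$ along $S$-conjugacy orbits, which is precisely what allows the sum over representatives to be expanded back to the full sum over $dH$.
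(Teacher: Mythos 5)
Your proposal is correct and follows essentially the same route as the paper: substitute $r=h\inv x\adhit g$ into \eqref{eq:15} to obtain \eqref{eq:4} (checking that the coefficient, the argument of $\ol\eta$, and the side condition all depend only on $r$), then use that $\mathfrak R_d$ is an $S$-orbit transversal of $dH$, the orbit-stabilizer count $|S|/|S\cap C_G(r)|$, and the $S$-class-function invariance of $\eta$ and of the condition $r^m\in S$ to re-expand the orbit sum into a sum over all of $dH$. You merely spell out in more detail (correctly) the verification that $H$-orbits on $HdH$ meet $dH$ in single $S$-orbits, which the paper records just before the theorem as a consequence of \eqref{eq:3}.
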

\begin{proof}
Substituting \cref{eq:Rd} in the indicator formula \cref{eq:15} yields 
  \begin{equation}
    \label{eq:4}
    \nu_m(\Indtobim_d(W))=\sum_{\substack{r\in\mathfrak R_d\\r^m\in S}}\frac1{|S\cap C_G(r)|}\ol\eta(r^m).
  \end{equation}
But for $s\in S$ we have $(s\adhit r)^m\in S\Leftrightarrow r^m\in S$, and $\eta((s\adhit r)^m)=\eta(r^m)$ whenever $r^m\in S$. Since $S\cap C_G(r)$ is the stabilizer of $r$ under the adjoint action of $S$, the first equality in \eqref{eq:14} follows. The second equality is a trivial reparametrization.
\end{proof}
In the following we keep the notations of \cref{indicatorformula}.
\begin{Rem}
  Note that for $r\in dH$ we have $r^m\in S\Leftrightarrow r^m\in H$. Thus we could modify the conditions in the sums \eqref{eq:14} and subsequent similar sums, but in the examples that we treated it seemed easier to check whether an element is in $S$ than to check whether it is in $H$.
\end{Rem}

\begin{Rem}
  The elements 
  \begin{equation}
    \label{eq:7}
    \mu_m(d):=\frac1{|S|}\sum_{\substack{r\in dH\\r^m\in S}}r^m=\frac1{|S|}\sum_{\substack{h\in H\\(dh)^m\in S}}(dh)^m\in \CC S
  \end{equation}
  for $m\in \N$ are central in the group algebra $\CC S$, and $\nu_m(\Indtobim_d(W))=\eta(\mu_m(d))$.
\end{Rem}
\begin{Rem}
\label{ifdcentral}
  If $d\in C_G(H)$, then $S=H$, and for $h\in H$ we have $(dh)^m=d^mh^m\in H$ if and only if $d^m\in H$, so that
  \begin{equation}
    \label{eq:8}
    \mu_m(d)=
    \begin{cases}
      d^m\frac1{|H|}\sum_{h\in H}h^m&\text{if }d^m\in H\\
      0&\text{otherwise,}
    \end{cases}
  \end{equation}
  and therefore, since $d^m\in H$ is in the center of $H$:
  \begin{equation}
    \label{eq:9}
    \nu_m(\Indtobim_d(W))=
    \begin{cases}
      \frac{\ol\eta(d^m)}{\eta(1)}\nu_m(W)&\text{if }d^m\in H\\
      0&\text{otherwise.}
    \end{cases}
  \end{equation}
  The most obvious case of this is when $d=1$; the image of $\Indtobim_1$ is the monoidal subcategory $\HM H..H.H.\subset\HM G..H.H.$, which is monoidally equivalent to $\Rep(H)$. The formula \eqref{eq:9} can also be used to easily obtain examples where the higher indicators are not real: The cyclic group $G$ of order $9$, its generator $d$, its subgroup $H$ of order $3$ and a nontrivial irreducible character of the latter will do to obtain $\nu_3(\Indtobim_d(W))$ a nontrivial third root of unity.
\end{Rem}

\begin{Lem}
  Let $y\in S$. Then
  \begin{equation}
    \label{eq:2}
    \sum_{\chi\in\Irr(S)}\nu_m(\Indtobim_d(\chi))\chi(y)=|\{h\in H|(dh)^m=y\}|.
  \end{equation}
\end{Lem}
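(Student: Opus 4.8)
The plan is to substitute the closed formula of \cref{indicatorformula} into the left-hand side of \eqref{eq:2}, interchange the two summations, and collapse the inner sum by the column (``second'') orthogonality relation for the irreducible characters of $S$; what remains is a purely combinatorial count, which I will settle using the conjugation action of $S$ on the coset $dH$.

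First I would rewrite, using the second equality in \eqref{eq:14},
\[
\nu_m(\Indtobim_d(\chi))=\frac1{|S|}\sum_{\substack{h\in H\\(dh)^m\in S}}\ol\chi\big((dh)^m\big)\qquad(\chi\in\Irr(S)),
\]
substitute this into $\sum_{\chi}\nu_m(\Indtobim_d(\chi))\chi(y)$, and swap the order of summation to obtain
\[
\frac1{|S|}\sum_{\substack{h\in H\\(dh)^m\in S}}\ \sum_{\chi\in\Irr(S)}\ol\chi\big((dh)^m\big)\,\chi(y).
\]
Column orthogonality applied to the two elements $(dh)^m,y\in S$ shows that the inner sum equals $|C_S(y)|$ when $(dh)^m$ is $S$-conjugate to $y$ and vanishes otherwise; moreover $(dh)^m\sim_S y$ already forces $(dh)^m\in S$, so the constraint in the outer sum becomes redundant, and the left-hand side of \eqref{eq:2} equals
\[
\frac{|C_S(y)|}{|S|}\,\big|\{h\in H\mid (dh)^m \text{ is $S$-conjugate to } y\}\big|.
\]

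It remains to identify this with $\big|\{h\in H\mid (dh)^m=y\}\big|$. The mechanism --- essentially the orbit observation already used to introduce $\mathfrak R_d$ before \cref{indicatorformula} --- is that $S=\Stab_H(dH)=H\cap(d\adhit H)$ acts on the coset $dH$ by conjugation: for $s\in S$ we have $d\inv sd\in H$, hence $s(dh)s\inv\in dH$ for every $h\in H$. Since $(srs\inv)^m=s\,r^m\,s\inv$, conjugation by a fixed $s\in S$ carries, for each $z\in S$, the set $\{h\in H\mid(dh)^m=z\}$ bijectively onto $\{h\in H\mid(dh)^m=s\adhit z\}$. Hence $z\mapsto\big|\{h\in H\mid(dh)^m=z\}\big|$ is constant on each $S$-conjugacy class contained in $S$, and summing this constant over the class of $y$, which has $|S|/|C_S(y)|$ elements, gives
\[
\big|\{h\in H\mid (dh)^m\sim_S y\}\big|=\frac{|S|}{|C_S(y)|}\,\big|\{h\in H\mid(dh)^m=y\}\big|;
\]
inserting this into the previous display yields \eqref{eq:2}.

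I expect the only step that requires genuine thought, rather than bookkeeping, to be this last one: orthogonality resolves the character sum only up to $S$-conjugacy, so the sharp count must be recovered through the orbit-counting argument for the $S$-action on $dH$. (Alternatively, one could organize the computation around the central element $\mu_m(d)\in\CC S$ of \eqref{eq:7}: writing $|S|\mu_m(d)=\sum_{z\in S}|\{h\in H\mid(dh)^m=z\}|\,z$, identity \eqref{eq:2} becomes the standard recovery of the coefficient of $y$ in a central element of $\CC S$ from its irreducible-character data; but the direct route above seems more transparent.)
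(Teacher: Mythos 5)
Your proof is correct, but it takes the dual route to the paper's. The paper's proof observes that $\zeta_m(y)=|\{h\in H\mid(dh)^m=y\}|$ is a class function on $S$ and then simply compares Fourier coefficients: the inner product of each side of \eqref{eq:2} with an irreducible character $\eta\in\Irr(S)$ equals $\nu_m(\Indtobim_d(\eta))$ --- on the left by first (row) orthogonality, on the right by \eqref{eq:14} --- so the two class functions coincide. You instead transform the left-hand side into the right directly: substitute \eqref{eq:14}, swap sums, collapse the resulting character sum by second (column) orthogonality, and then use the $S$-orbit count on $dH$ to pass from the count of $h$ with $(dh)^m$ merely $S$-conjugate to $y$ to the sharp count with $(dh)^m=y$. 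The orbit argument you spell out (the bijection $h\mapsto(d\inv sd)hs\inv$ on $H$, equivalently conjugation by $s\in S$ on $dH$) is precisely the content of the paper's ``easily seen to be a class function'' assertion, which the paper also needs but leaves to the reader; your version makes that mechanism explicit. Both proofs therefore rest on the same three ingredients --- \eqref{eq:14}, character orthogonality, and the class-function property of $\zeta_m$ --- but the paper's Fourier-coefficient comparison is shorter, while yours derives the right-hand side step by step without needing to observe a priori that it defines a class function.
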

In fact the function $\zeta_m(y)=|\{h\in H|(dh)^m=y\}|$ is easily seen to be a class function on $S$, so one can verify \eqref{eq:2} by taking its scalar product with an irreducible character $\eta$. The left hand side gives the $m$-th indicator by the orthogonality relations, the right hand side by \eqref{eq:14}.

\begin{Rem}
  Assume that $H\subset G$ is part of an exact factorization, i.\ e.\ there exists a subgroup $L\subset G$ such that $LH=G$ and $L\cap G=\{1\}$. As pointed out in \cite{Sch:HBCESK}, the category $\HM G..H.H.$ is then equivalent to the category of modules over a bismash product Hopf algebra $\CC ^L\# \CC H$. Thus, our results comprise a method to calculate indicators for bismash product Hopf algebras (of which the double below is a special case).
\end{Rem}

\begin{Expl}\label{caseofdouble}
Let $\Gamma$ be a finite group, $G=\Gamma\times \Gamma$, and $H=\Delta(\Gamma)$, where $\Delta\colon \Gamma\to\Gamma\times\Gamma$ is the diagonal embedding. It is well known that the category $\HM G..H.H.\cong\HM\Gamma.\Gamma.\Gamma.\Gamma.$ is equivalent to the module category of the Drinfeld double of $\Gamma$ (in fact this is a special case of \cite{MR95j:16047}).

Let $\mathfrak G$ be a cross section of the conjugacy classes of $\Gamma$. Then $\{(\gamma,1)|\gamma\in\mathfrak G\}$ is a cross section of the double cosets of $H$ in $G$. Let $d=(\gamma,1)$. Then
$S=\Stab_H(dH)=\Delta(C_\Gamma(\gamma))$. Let $h=\Delta(\theta)\in H$ and $m\in\N$. Then $(dh)^m=(\gamma\theta,\theta)^m=((\gamma\theta)^m,\theta^m)$, and thus $(dh)^m\in S$ if and only if $(\gamma\theta)^m=\theta^m$. Thus our indicator formula yields
\begin{equation}
  \label{eq:5}
\nu_m(\Indtobim_d(W))=\frac1{|C_\Gamma(\gamma)|}\sum_{\substack{\theta\in\Gamma\\(\gamma\theta)^m=\theta^m}}\ol\eta(\theta^m).
\end{equation}
This formula was obtained in \cite{KasSomZhu:HFSI}, see also \cite{2012arXiv1208.4153I}, where the corresponding special case of \eqref{eq:2} can be found. Note that we can replace $\ol\eta$ by $\eta$ since the indicators in this case are known to be real.
\end{Expl}

In the proof of \cref{indicatorformula} we have obtained the simple looking indicator formula \eqref{eq:14} via the more complicated formula \eqref{eq:4}. But in fact the latter is, in some respects, better than the former: It involves a sum over less terms, namely orbits of the adjoint action of $S$ instead of individual elements of $dH$. Of course for this simplification we could have taken any section of the orbits on $dH$ instead of $\mathfrak R_d$. But in fact we can also pass to orbits over a group different from $S$; also, it may be convenient to take orbits in $H$ of the action on $H$ corresponding to the adjoint action on $dH$:
\begin{Prop}
  \label{prop:betterorbits}
  Continuing in the notations of \cref{indicatorformula}, set $E=C_G(d)\cap SC_G(S)\cap N_G(H)$. Then $SE=ES$ is a subgroup of $G$. Let $S'\subset SE$ be a subgroup, and let $\mathfrak R'_d$ be a section of the orbits of $dH$ under the adjoint action of $S'$ on $dH$. Then 
  \begin{equation}
    \label{eq:16}
    \nu_m(\Indtobim_d(W))
=\frac 1{|S|}\sum_{\substack{r\in\mathfrak R'_d\\r^m\in S}}\frac{|S'|}{|S'\cap C_G(r)|}\ol\eta(r^m).
  \end{equation}
Alternatively, let $S'$ act on $H$ by the ``twisted conjugation'' defined by $s\twisthit h=(d\inv\adhit s)hs\inv$. Let $\mathfrak T'_d$ be a system of representatives of the orbits. Then 
  \begin{equation}
    \label{eq:11}
    \nu_m(\Indtobim_d(W))
=\frac 1{|S|}\sum_{\substack{h\in\mathfrak T'_d\\(dh)^m\in S}}\frac{|S'|}{|S'\cap C_G(dh)|}\ol\eta((dh)^m).
  \end{equation}
\end{Prop}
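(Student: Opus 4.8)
The plan is to derive both identities from \cref{indicatorformula} by the same regrouping device that turns \eqref{eq:14} into \eqref{eq:4} in its proof, except that the orbits are now taken under the possibly larger group $S'$ rather than under $S$. Concretely, I would regroup the sum $\frac1{|S|}\sum_{r\in dH,\ r^m\in S}\ol\eta(r^m)=\nu_m(\Indtobim_d(W))$ according to the $S'$-orbits in $dH$, and for this I need exactly three facts: (i) conjugation by $S'$ maps $dH$ to $dH$, so that $S'$ acts on $dH$; (ii) the condition ``$r^m\in S$'' is constant along each $S'$-orbit; (iii) the number $\ol\eta(r^m)$ is constant along each $S'$-orbit whenever $r^m\in S$. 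The three clauses in $E=C_G(d)\cap SC_G(S)\cap N_G(H)$ are tailored to provide (i), (ii) and (iii) respectively.

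First I would record the group theory. Because $S=H\cap dHd\inv$, any $e\in C_G(d)\cap N_G(H)$ satisfies $eSe\inv=(eHe\inv)\cap e(dHd\inv)e\inv=H\cap d(eHe\inv)d\inv=S$, so $E\subseteq N_G(S)$. Since $S$ normalizes $C_G(S)$, the product $SC_G(S)$ is a subgroup, so $E$, being an intersection of three subgroups, is itself a subgroup; and $E\subseteq N_G(S)$ then makes $SE=ES$ a subgroup of $G$ in which $S$ is normal. Moreover $SE\subseteq S\cdot SC_G(S)=SC_G(S)$, whence $S'\subseteq SC_G(S)$: every $s'\in S'$ can be written $s'=s_0z$ with $s_0\in S$, $z\in C_G(S)$, and then conjugation by $s'$ maps $S$ onto $S$ and acts on $S$ as the \emph{inner} automorphism $t\mapsto s_0ts_0\inv$, because $z$ centralizes every element of $S$.

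Next I would establish the three facts. Write $s'=se$ with $s\in S$, $e\in E$; using $d\inv ed=e$ one gets $d\inv s'd=(d\inv sd)e$, hence for $h\in H$ one has $s'(dh)s'\inv=d\bigl((d\inv sd)(ehe\inv)s\inv\bigr)$, and the bracketed element lies in $H$ since $d\inv sd\in H$ (as $s\in dHd\inv$), $ehe\inv\in H$ (as $e\in N_G(H)$) and $s\inv\in H$; thus $s'\adhit(dH)\subseteq dH$, and the same applied to $s'\inv$ gives (i). A nearly identical computation with $r^m$ in place of $h$, additionally using $d\inv r^m d\in H$ (valid when $r^m\in S\subseteq dHd\inv$), shows that $r^m\in S$ implies $(s'\adhit r)^m=s'r^ms'\inv\in S$, which is (ii). For (iii), when $r^m\in S$ the previous paragraph gives $\ol\eta(s'r^ms'\inv)=\ol\eta(s_0r^ms_0\inv)=\ol\eta(r^m)$, because $\ol\eta$ is a class function on $S$. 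Granting (i)--(iii), in $\nu_m(\Indtobim_d(W))=\frac1{|S|}\sum_{r\in dH,\ r^m\in S}\ol\eta(r^m)$ both the summation range and the summand are constant on each $S'$-orbit; summing over a section $\mathfrak R'_d$ and noting that the $S'$-orbit of $r$ has stabilizer $S'\cap C_G(r)$, hence cardinality $|S'|/|S'\cap C_G(r)|$, yields \eqref{eq:16}. Finally \eqref{eq:11} is \eqref{eq:16} carried across the bijection $H\to dH$, $h\mapsto dh$: for $s\in S'$ one checks $d(s\twisthit h)=d(d\inv sd)hs\inv=sdhs\inv=s\adhit(dh)$, so this bijection conjugates the action $\twisthit$ of $S'$ on $H$ into the conjugation action on $dH$ (in particular $\twisthit$ really is a left action), matches orbits, and makes the stabilizer of $h$ equal to $S'\cap C_G(dh)$; substituting $r=dh$ and $\mathfrak R'_d=d\mathfrak T'_d$ into \eqref{eq:16} then gives \eqref{eq:11}.

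The one genuinely non-formal point, and the step I expect to have to handle with care, is (iii): one must notice that the $SC_G(S)$-clause in the definition of $E$ is exactly what forces conjugation by an arbitrary element of $S'$ to restrict to an \emph{inner} automorphism of $S$, so that the class function $\ol\eta$ cannot detect it. Everything else --- including the assertion that $SE=ES$ is a subgroup --- drops out of the same coset-and-normalizer bookkeeping already needed for (i) and (ii).
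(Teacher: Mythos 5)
Your proof is correct and follows essentially the same route as the paper's: you show $E$ normalizes $S$ (and is itself a subgroup) to get that $SE=ES$ is a group, then use the three clauses $C_G(d)$, $N_G(H)$, $SC_G(S)$ precisely to guarantee that $S'$ acts on $dH$ by conjugation, that the condition $r^m\in S$ is orbit-invariant, and that $\ol\eta(r^m)$ is orbit-invariant, after which the orbit-stabilizer count and the reparametrization $h\mapsto dh$ give both formulas. The only place your write-up is slightly loose is in (ii), where the cleanest justification is simply that $S'\subseteq SE\subseteq N_G(S)$ already normalizes $S$, rather than a ``nearly identical computation'' to (i); but that is a phrasing issue, not a gap.
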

\begin{proof}
Let $x\in E$ and $u\in S=H\cap (d\adhit H)$. Then $x\adhit u\in(x\adhit H)\cap (xd\adhit H)=H\cap d\adhit H=S$ since $x\adhit H=H$ and $xd=dx$ by hypothesis. Thus $E$ normalizes $S$, and $SE=ES$ is a subgroup of $G$. Now let $x\in E$ and $h\in H$. Since $x\in SC_G(S)$, we have $(dh)^m\in S$ if and only if $x\adhit(dh)^m\in S$; in fact these two elements are then conjugate in $S$. The condition $x\in C_G(d)$ implies $x\adhit(dh)^m=(d(x\adhit h))^m$, and $x\in N_G(H)$ implies $x\adhit h\in H$. Thus the action of $S'$ on $dH$ is well defined, and the condition $r^m\in S$ is invariant along the orbits as well as the values $\eta(r^m)$ along those orbits where $r^m\in S$. This implies \eqref{eq:16}, since $S'\cap C_G(r)$ is the stabilizer of $r$. Since $s\adhit (dh)=d(s\twisthit h)$ for $s\in S$ and $h\in H$, we obtain \eqref{eq:11} by a simple reparametrization.
\end{proof}
\begin{Rem}
The previous result is perhaps the most useful if $S'\subset C_G(d)$, so that the twisted adjoint action coincides with the adjoint action. At any rate it allows to replace $H$ by a set of orbit representatives before passing to the nastier part of the calculations involved in applying the indicator formula to concrete examples.

To set notations for subsequent calculations, let $\ol G$ be the set of orbits of $G$ under the adjoint action of $S'$, and $\ol S$ the image of $S$ in $\ol G$. We do not distinguish notationally elements of $\ol G$ from those of $G$. We also let $\tilde H$ be the set of orbits of the twisted adjoint action of $S'$ on $H$, and $Q(d):=\sum_{h\in H}h\in \CC \tilde H$. Set
\begin{equation}
  \label{eq:12}
  T(d):=\sum_{h\in\mathfrak T'_d}[S'\colon S'\cap C_G(dh)]dh=dQ(d)\in \CC \ol G.
\end{equation}
Let $\CC \ol G\ni x\mapsto x^{[m]}\in \CC \ol G$ be the linear map induced by taking $m$-th powers of group elements. Let $\pi\colon \CC \ol G\to \CC \ol S$ be the linear projection annihilating $\ol G\setminus \ol S$. Then
\begin{align}
  \label{eq:13}
  \nu_m(\Indtobim_d(W))&=\ol\eta(\ol\mu_m(d))&\text{with }&&\ol\mu_m(d)=\frac1{|S|}\pi(T(d)^{[m]}).
\end{align}
Of course $\ol\mu_m(d)$ is just the image of $\mu_m(d)$ in $\CC \ol S$.
\end{Rem}

  

\section{Example calculations}
\label{sec:pedestrian-example}

Consider the symmetric group $S_n$ and the subgroup $S_m\subset S_n$ for $m<n$. For $d\in S_n$ the stabilizer $\Stab_{S_m}(dS_n)=S_m\cap d\adhit S_m$ consists of those permutations $\sigma\in S_m$ for which $d\inv\adhit \sigma\in S_m$. For $d\inv\adhit \sigma$ to fix every element greater than $m$ it is necessary and sufficient that $\sigma$ fix every element $k $ with $d\inv(k)\not\in\{1,\dots,m\}$. Thus $\Stab_{S_m}(dS_m)=S_{\{1,\dots,m\}\cap \{d(1),\dots,d(m)\}}$ is a symmetric group. We have seen that in general higher indicators for the objects of $\HM G..H.H.$ are nonnegative rational linear combinations of character values of the stabilizers $\Stab_H{dH}$. Moreover, higher indicators for any pivotal fusion category are cyclotomic integers. Thus
\begin{Prop}
  Let $m<n$. Then all values of the higher Frobenius-Schur indicators for the objects of $\HM S_n..S_m.S_m.$ are integers.
\end{Prop}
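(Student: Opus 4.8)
The plan is to combine the explicit formula \eqref{eq:14} with two classical facts: the integrality of the character table of a symmetric group, and the fact that higher Frobenius--Schur indicators of objects in a pivotal fusion category are algebraic integers.

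First I would reduce to simple objects. The category $\HM S_n..S_m.S_m.$ is semisimple and $\nu_m$ is additive on direct sums, so it suffices to treat the simple objects, which by the classification recalled in \cref{sec:preliminaries} are exactly the $\Indtobim_d(W)$ with $d$ running over a set of representatives of the double cosets $S_m\lquot S_n/S_m$ and $W\in\Irr(S)$, where $S=\Stab_{S_m}(dS_m)$.

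Next, recall from the paragraph preceding the statement that $S=S_{\{1,\dots,m\}\cap\{d(1),\dots,d(m)\}}$ is itself a symmetric group. Since every complex representation of a symmetric group is realizable over $\mathbb Q$, its character $\eta$ takes only rational integer values; in particular $\ol\eta=\eta$ and $\ol\eta(r^m)\in\mathbb Z$ for all $r$. Substituting this into \cref{indicatorformula},
\begin{equation*}
  \nu_m(\Indtobim_d(W))=\frac1{|S|}\sum_{\substack{r\in dS_m\\r^m\in S}}\ol\eta(r^m)
\end{equation*}
is then visibly a rational number.

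Finally I would invoke the general fact that $\nu_m(X)$ is a cyclotomic integer --- in particular an algebraic integer --- for every object $X$ of a pivotal fusion category. A rational number that is an algebraic integer lies in $\mathbb Z$; hence $\nu_m(\Indtobim_d(W))\in\mathbb Z$, and the Proposition follows. There is no real obstacle here: the only point that needs a word is the reduction to simple objects together with the observation that \eqref{eq:14} already exhibits the indicator as a $\mathbb Q$-linear combination of values of $\eta$, so that the integrality of the character table of the symmetric group $S$ is precisely what upgrades ``cyclotomic integer'' to ``rational integer''. All the substantial work --- identifying the stabilizer as a symmetric group and establishing the indicator formula --- is already in place.
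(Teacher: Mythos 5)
Your argument is correct and follows essentially the same route as the paper: identify the stabilizer as a symmetric group, use rationality of symmetric-group characters together with the indicator formula \eqref{eq:14} to conclude $\nu_m(\Indtobim_d(W))\in\mathbb Q$, and then invoke the general fact that higher indicators in a pivotal fusion category are cyclotomic integers to upgrade this to $\mathbb Z$. The only cosmetic addition is your explicit reduction to simple objects, which the paper leaves implicit.
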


The following example shows that this can fail if we embed $S_m$ into $S_n$ in a different fashion.

\begin{Expl}
  Consider
  \begin{equation*}
    G=S_9\supset H=\{\sigma\in S_9|i\equiv j(3)\Rightarrow\sigma(i)\equiv\sigma(j)(3)\},
  \end{equation*}
  so $H$ is the subgroup of those permutations in $S_9$ that preserve conjugacy modulo $3$. Thus $H\cong S_3$ is generated by $t=(123)(456)(789)$ and $s=(12)(45)(78)$.

  The element $d=(147258369)\in S_9$ satisfies $d^3=t$, so in particular $d\inv\adhit t\in H$. On the other hand $d\inv\adhit s=(12)(45)(79)$, so $d\inv\adhit s\not\in H$ because $1\equiv 7(3)$ while $2\not\equiv 9(3)$. It follows that $S=\Stab_H(dH)=\langle t\rangle.$

  To compute $\mu_3(d)$, observe $d^3=(dt)^3=(dt^2)^3=t$. The computation $ds=(157369)(248)$ and $(ds)^3=(13)(56)(79)\not\in S$ shows that $(dh)^3\not\in S$ for $h\in H\setminus\{1,t,t^2\}$, since such $h$ are conjugate to $s$ by powers of $t$, which commute with $d$. Thus $\mu_3(d)=t$. 

  In particular $\nu_3(\Indtobim_d(\eta))=\zeta\inv$ is not real when $\eta(t)=\zeta$ is a nontrivial third root of unity.
\end{Expl}

We will now compute some of the indicator values for the canonically embedded subgroups $S_{n-2}\subset S_n$ (as we shall see, this contains in a sense the case $S_{n-1}\subset S_n$, or rather $S_{n-2}\subset S_{n-1}$). We note already that all the indicator values we will find are nonnegative.

For $n\geq 4$, it is easy to check that $S_{n-2}$ has the following seven double cosets in $S_n$:
\begin{align*}
  \{\sigma\in S_n&|\sigma(n-1)=n-1,\sigma(n)=n)\}=S_{n-1}\\
  \{\sigma\in S_n&|\sigma(n-1)\neq n-1,\sigma(n)=n)\}\\
  \{\sigma\in S_n&|\sigma(n-1)=n-1,\sigma(n)\neq n)\}\\
  \{\sigma\in S_n&|\sigma(n-1)=n,\sigma(n)=n-1)\}\\
  \{\sigma\in S_n&|\sigma(n-1)=n,\sigma(n)\neq n-1)\}\\
  \{\sigma\in S_n&|\sigma(n-1)\neq n,\sigma(n)=n-1)\}\\
  \{\sigma\in S_n&|\{\sigma(n-1),\sigma(n)\}\cap\{n-1,n\}=\emptyset\}.
\end{align*}
A convenient set of double coset representatives is $d_1=()$, $d_2=(n-2,n-1)$, $d_3=(n-2,n)$, $d_4=(n-1,n)$,  $d_5=(n-2,n-1,n)$, $d_6=(n-2,n,n-1)$, $d_7=(n-3,n-1)(n-2,n)$.

Note that $d_2$ and $d_3$ are conjugate by $(n-1,n)$. The same holds for $d_5$ and $d_6$. We have $\Stab_{S_{n-2}}(d_2S_{n-2})=\Stab_{S_{n-2}}(d_5S_{n-2})=S_{n-3}$, $\Stab_{S_{n-2}}(d_7S_{n-2})=S_{n-4}$, and $\Stab_{S_{n-2}}(d_4S_{n-2})=S_{n-2}$. 

Note that every $d_i$ commutes with the elements in $\Stab_{S_{n-2}}(d_iS_{n-2})$; this is particular to our choice of representatives. It implies that the twisted conjugation action of the stabilizers on the group $S_{n-2}$ from \cref{prop:betterorbits} is the ordinary adjoint action.

Note further that $d_4$ commutes with the elements of $S_{n-2}$. By \cref{ifdcentral} it follows that 
\begin{equation}
  \label{eq:10}
  \nu_m(\Indtobim_{(n-1,n)}(W))=
  \begin{cases}
    \nu_m(W)&\text{if $m$ is even,}\\
    0&\text{if $m$ is odd,}
  \end{cases}
\end{equation}
for any $W\in\Rep(S_{n-2})$, while $\nu_m(\Indtobim_{()}(W))=\nu_m(W)$.

Note also that $d_2\in S_{n-1}$. Thus, the indicators for objects in $\Indtobim_{d_2}(\Rep(S_{n-2}))$ can also be viewed as indicators in the subcategory $\HM S_{n-1}..S_{n-2}.S_{n-2}.$. The subgroup $S_{n-2}\subset S_{n-1}$ is part of an exact factorization, $S_{n-1}=C_{n-1}\cdot S_{n-2}$, where $C_{n-1}$ denotes the cyclic group generated by the $(n-1)$-cycle $(1,2,\dots,n-1)$. As remarked already, these indicators are indicators for modules over a bismash product Hopf algebra $\CC ^{C_{n-2}}\# \CC S_{n-1}$. Observe that the exact factorization suggests a different choice of coset representative, namely the $(n-1)$-cycle instead of $d_2$. We have the feeling that $d_2$ is the better choice since the $(n-1)$-cycle does not commute with elements in the corresponding stabilizer.

Since the images of $\Indtobim_{d_2}$ and $\Indtobim_{d_3}$ are mapped to each other by an autoequivalence, as well as the images of $\Indtobim_{d_5}$ and $\Indtobim_{d_6}$, we can concentrate on the indicators of the objects in the images of $\Indtobim_{d_i}$ for $i=2,5,7$. We will treat some of them below for small values of $n$.

\subsection{$S_2\subset S_4$}
\label{sec:s_2subset-s_4}

Consider $H=\langle(1\;2)\rangle\subset G=S_4$. We have the following double coset representatives, with their right cosets and double cosets:
\begin{equation*}
  \label{eq:6}
  \begin{array}[c]{|r|c|l|l|c|}
\hline
    i&d_i&d_iH\setminus\{d_i\}&Hd_iH\setminus d_iH&\Stab_H(d_iH)\\
    \hline
    1&()&(1\;2)&&H\\
    \hline    
    2&(2\;3)&(1\;2\;3)&(1\;3),(1\;3\;2)&\{()\}\\
\hline
 3&(2\;4)&(1\;2\;4)&(1\;4),(1\;4\;2)&\{()\}\\
\hline
    4&(3\;4)&(1\;2)(3\;4)&&H\\
\hline
    5&(2\;3\;4)&(1\;2\;3\;4)&(1\;3\;4),(1\;3\;4\;2)&\{()\}\\
\hline
6&(2\;4\;3)&(1\;2\;4\;3)&(1\;4\;3),(1\;4\;3\;2)&\{()\}\\
\hline
7&(2\;3)(1\;4)&(1\;4\;2\;3)&(1\;4\;2\;3),(1\;3\;
2\;4)&\{()\}\\
\hline
  \end{array}
\end{equation*}
We proceed to list the sequences of the higher Frobenius-Schur indicators for all the simple objects of $\HM G..H.H.$ in the images of the functors $\Indtobim_{d_i}$. These sequences are periodic and we list them for one complete period:

For $d_1$, they are the sequences of the higher Frobenius-Schur indicators of the representations of $H$, namely $(1,\dots)$ with period one for the trivial, and $(1,0,\dots)$ with period two for the nontrivial representation.

In all other cases, the only powers of the elements of $d_iH$ that lie in the stabilizer $\Stab_H(d_iH)$ are identity elements. (This requires only a glance for $d_4$, as the stabilizer itself is trivial in the other cases.) Thus (regardless of the choice of representation also in the $d_4$ case), the indicator $\nu_m$ counts how many of the two $m$-th powers of the two elements of $d_iH$ are trivial; the count is then divided by two in the $d_4$ case. Thus the indicator sequences, up to a full period, are:
\begin{align*}
  (\nu_m(\Indtobim_{d_i}(W)))_m=
  \begin{cases}
    (0,1,1,1,0,2,\dots)&\text{for }i=2,3\\
    (0,1,\dots)&\text{for }i=4\\
    (0,0,1,1,0,1,0,1,1,0,0,2,\dots)&\text{for }i=5,6\\
    (0,1,0,2,\dots)&\text{for }i=7.
  \end{cases}
\end{align*}
(Note that the case $d_4$ was already treated above using \cref{ifdcentral}.)
\subsection{$S_3\subset S_5$}
\label{sec:s_3subset-s_5}
In this case we have the right cosets
\begin{align*}
\begin{array}[c]{|r|c|l|}\hline
  i&d_i&d_iS_3\setminus\{d_i\}\\
  \hline
  1&()&(1 2),(1 3),(2 3),(1 2 3),(1 3 2)\\
  \hline  2&(3 4)&(1 2)(3 4),(1 4 3),(2 4 3),(1 2 4 3),(1 4 3 2)\\
\hline
3&\multicolumn{2}{l|}{\text{conjugate preceding row by }(4 5)}\\
\hline
4&(4 5)&(1 2)(4 5),(1 3)(4 5),(2 3)(4 5),(1 2 3)(4 5),(1 3 2)(4 5)\\
\hline
5&(3 4 5)&(1 2)(3 4 5),(1 4 5 3),(2 4 5 3),(1 2 4 5 3),(1 4 5 3 2)\\
\hline
6&\multicolumn{2}{l|}{\text{conjugate preceding line by }(4 5)}\\
\hline
7&(2 4 3 5)&(1 4 3 5 2),(1 5 2 4 3),(2 5)(3 4),(1 4 3)(2 5),(1 5 2)(3 4)\\
\hline
\end{array}
\end{align*}
We have
\begin{equation*}
  \Stab_{S_3}(d_iS_3)=
  \begin{cases}
    S_3&\text{for }i=1,4\\
    S_2=\langle(1\;2)\rangle&\text{for }i=2,3,5,6\\
    \{()\}&\text{for }i=7.
  \end{cases}
\end{equation*}

As indicated above, we will only treat the indicators for $d_2,d_5$ and $d_7$.

One sees that for $i=2$ the only possibility for a power of an element of $d_iS_3$ to be in $\Stab_{S_3}(d_iS_3)$ is if that power is trivial. The same is of course true for $i=7$. So the $m$-th indicators for the simple objects in the images of $\Indtobim_{d_i}$ for $i=2,7$ do not ``see'' the representations of $\Stab_{S_3}(d_iS_3)$, but only count the number of elements whose orders divide $m$; the count has to be divided by $2$ if $i=2$. We have
\begin{align*}
  \nu_m(\Indtobim_{d_2}(W))&=
  \begin{cases}
    0&\text{when }(m,12)=1\\
    1&\text{when }(m,12)=2,3\\
    2&\text{when }(m,12)=4,6\\
    3&\text{when }(m,12)=12.
  \end{cases}\\
  \nu_m(\Indtobim_{d_7}(W))&=
  \begin{cases}
    0&\text{when }(m,60)=1,3\\
    1&\text{when }(m,60)=2\\
    2&\text{when }(m,60)=4,5,15\\
    3&\text{when }(m,60)=6,10\\
    4&\text{when }(m,60)=12,20\\
    5&\text{when }(m,60)=30\\
    6&\text{when }(m,60)=60.
  \end{cases}
\end{align*}

Finally $d_5S_3$ contains one element, $(1\;2)(3\;4\;5)$, whose third power is in $\Stab_{S_3}(d_5S_3)\setminus\{()\}$. Powers of the other elements are only in the stabilizer when they are trivial. Thus we obtain
\begin{equation*}
  \mu_m(d_5)=\mu_m(d_6)=
  \begin{cases}
    0&\text{when }(m,60)=1,2\\
    \frac12(()+(1\;2))&\text{when }(m,60)=3\\
    ()&\text{when }(m,60)=4,5,6,10\\
    2()&\text{when }(m,60)=12,20,30\\
    \frac12(3()+(1\;2))&\text{when }(m,60)=15\\
    3()&\text{when }(m,60)=60
  \end{cases}
\end{equation*}
For the trivial representation $W_0$ of $\langle(1\;2)\rangle$ this yields
\begin{equation*}
  \nu_m(\Indtobim_{d_5}(W_0))=\nu_m(\Indtobim_{d_6}(W_0)=
  \begin{cases}
    0&\text{when }(m,60)=1,2\\
    1&\text{when }(m,60)=3,4,5,6,10\\
    2&\text{when }(m,60)=12,15,20,30\\
    3&\text{when }(m,60)=60.
  \end{cases}
\end{equation*}
For the nontrivial irreducible representation $W_1$ of $\langle(1\;2)\rangle$ we obtain
\begin{equation*}
  \nu_m(\Indtobim_{d_5}(W_1))=\nu_m(\Indtobim_{d_6}(W_1)=
  \begin{cases}
    0&\text{when }(m,60)=1,2,3\\
    1&\text{when }(m,60)=4,5,6,10,15\\
    2&\text{when }(m,60)=12,20,30\\
    3&\text{when }(m,60)=60.
  \end{cases}
\end{equation*}

\subsection{$S_4\subset S_6$}
\label{sec:s_4subset-s_6}

Since $|S_4|=24$, it seems worth reducing the size of calculations in this case by considering orbits of $S_4$ as outlined in \cref{prop:betterorbits}. We will use $S'=\Stab_{S_4}(d_iS_4)$.

For $i=2,5$ the stabilizer is $S_3$. The orbits of $S_4$ under the adjoint action of $S_3$ are obtained by subdividing the well-known conjugacy classes of $S_4$ according to the placement of the letter $4$ in the respective cycle structure. Trusting details to the reader, we state:
\begin{align*}
  Q(d_i)=&()+3(1 2)+3(1 4)\\
       &+2(1 2 3)+6(1 2 4)\\
       &+3(1 2)(3 4)\\
       &+6(1 2 3 4).
\end{align*}
From this we obtain
\begin{align*}
  T((4 5))=(4 5)Q((4 5))=&(45)+3(12)(45)+3(154)\\
                            &+2(123)(45)+6(1254)\\
                            &+3(12)(354)\\
                            &+6(12354)
\end{align*}
and
\begin{align*}
  T((456))=(456)Q((456))=&(456)+3(12)(456)+3(1564)\\
                       &+2(123)(456)+6(12564)\\
                       &+3(12)(3564)\\
                       &+6(123564).
\end{align*}
Thus (omitting the neutral element and writing $3:=3()\in \CC \ol S$ etc.)
{\allowdisplaybreaks
\begin{align*}
 \ol\mu_2((45))&=\frac16(1+3+2(123))=\frac13(2+(123)),\\
 \ol\mu_3((45))&=\frac16(3+3(12))=\frac12(1+(12)),\\
 \ol\mu_4((45))&=\frac16(1+3+2(123)+6)=\frac13(5+(123)),\\
 \ol\mu_5((45))&=1,\\
 \ol\mu_6((45))&=\frac 16(1+3+3+2+3)=2\\
 \ol\mu_{10}((45))&=\frac 16(1+3+2(123)+6)=\frac13(5+(123))=\ol\mu_4((45)),\\
 \ol\mu_{12}((45))&=\frac16(1+3+3+2+6+3)=3,\\
 \ol\mu_{15}((45))&=\frac16(3+3(12)+6)=\frac12(3+(12)),\\
 \ol\mu_{30}((45))&=\frac16(1+3+3+2+3+6)=3=\ol\mu_{12}((45))\\
\ol\mu_{20}((45))&=\frac16(1+3+2(123)+6+6)=\frac13(8+(123))
 \ol\mu_{60}((45))&=4,
 \ol\mu_2((456))&=0,\\
 \ol\mu_3((456))&=\frac16(1+3(12)+2)=\frac12(1+(12)),\\
 \ol\mu_4((456))&=\frac16(3+3)=1,\\
 \ol\mu_5((456))&=1,\\
 \ol\mu_6((456))&=\frac16(1+3+2+6)=2,\\
 \ol\mu_{10}((456))&=1,\\
 \ol\mu_{12}((456))&=\frac16(1+3+3+2+3+6)=3,\\
 \ol\mu_{15}((456))&=\frac16(1+3(12)+2+6)=\frac12(3+(12)),\\
\ol\mu_{20}((456))&=\frac16(3+6+3)=2\\
 \ol\mu_{30}((456))&=\frac16(1+3+2+6+6)=3,\\
 \ol\mu_{60}((456))&=4.
\end{align*}}

For $d_7$ the calculations are (even) more tedious; we now need the $S_2$-orbits of $S_4$, that is, the subdivision of the conjugacy classes of $S_4$ according to the placement of the letters $3,4$ in the cycle structure. Thus
\begin{align*}
  Q((35)(46))=&()\\
              &+(12)+2(13)+2(14)+(34)\\
              &+2(123)+2(124)+2(134)+2(143)\\
              &+(12)(34)+2(13)(24)\\
              &+2(1234)+2(1243)+2(1324)
\end{align*}
and
\begin{align*}
  T((35)(46))=&(35)(46)\\
              &+(12)(35)(46)+2(153)(46)+2(164)(35)+(3645)\\
              &+2(1253)(46)+2(1264)(35)+2(15364)+2(16453)\\
              &+(12)(3645)+2(153)(264)\\
              &+2(125364)+2(126453)+2(153264),
\end{align*}
giving
\begin{align*}
  \ol\mu_2((35)(46))&=\ol\mu_3((35)(46))=1,\\
  \ol\mu_4((35)(46))&=4,\\
  \ol\mu_5((35)(46))&=2,\\
  \ol\mu_6((35)(46))&=7,\\
  \ol\mu_{10}((35)(46))&=3,\\
  \ol\mu_{12}((35)(46))&=10,\\
  \ol\mu_{15}((35)(46))&=3,\\
  \ol\mu_{20}((35)(46))&=6,\\
  \ol\mu_{30}((35)(46))&=9,\\
  \ol\mu_{60}((35)(46))&=12.
\end{align*}
In particular, the indicators of the two simples in the image of $\Indtobim_{(35)(46)}$ are identical, while for the other cases we have to distinguish between the three irreducible representations of $S_3$, to wit the trivial representation $W_0$, the sign representation $W_1$ and the two-dimensional irreducible $W_2$. We obtain:
\begin{align*}
  \begin{array}[c]{|c|c||c|c|c|c|c|c|c|c|c|c|c|}
    \hline
    \multicolumn{2}{|c||}{\text{object}}&\multicolumn{11}{|c|}{\nu_m\text{ with }m=}\\
    \hline
    d_i&W_j&2&3&4&5&6&10&12&15&20&30&60\\
    \hline\hline
    \multirow3*{(45)}&W_0&1&1&2&1&2&2&3&2&3&3&4\\
                     &W_1&1&0&2&1&2&2&3&1&3&3&4\\
                     &W_2&1&1&3&2&4&3&6&3&5&6&8\\
    \hline
    \multirow3*{(456)}&W_0&0&1&1&1&2&1&3&2&2&3&4\\
                      &W_1&0&0&1&1&2&1&3&1&2&3&4\\
                      &W_2&0&1&2&2&4&2&6&3&4&6&8\\
    \hline
    (35)(46)&\text{any}&1&1&4&2&7&3&10&3&6&9&12\\
    \hline
  \end{array}
\end{align*}

\subsection{$S_5\subset S_7$}

If we want to deal with the representations associated to $d_7=(46)(57)$ as in the preceding example, we calculate with a sum $Q((46)(57))$ with as many terms as there are orbits in $S_5$ of the adjoint action of $S_3$. One can check that there are $28$ orbits. But we can reduce the task considerably (if not quite by half) by extending the stabilizer to a larger group $S'$ as indicated in \cref{prop:betterorbits}. As the element $(45)(67)$ commutes with $d_7$ and $\Stab_{S_5}(d_7S_5)$, and normalizes $S_5$, we can choose $S'=S_3\cdot\langle(45)(67)\rangle$. Thus we get
\begin{align*}
  Q((46)(57))=&()+3(12)+6(14)+(45)\\
             &+2(123)+12(124)+6(145)\\
             &+6(12)(34)+3(12)(45)+6(14)(25)\\
             &+12(1234)+12(1245)+6(1425)\\
             &+6(12)(345)+12(14)(235)+2(45)(123)\\
             &+12(12345)+12(12435)
\end{align*}
with ``only'' $18$ terms. We calculate
\begin{align*}
  T((46)(57))=&(46)(57)+3(12)(46)(57)+6(164)(57)+(4756)\\
     &+2(123)(46)(57)+12(1264)(57)+6(16475)\\
     &+6(12)(364)(57)+3(12)(4756)+6(164)(275)\\
     &+12(12364)(57)+12(126475)+6(164275)\\
     &+6(12)(36475)+12(164)(2375)+2(4756)(123)\\
     &+12(1236475)+12(1264375).
\end{align*}
From here, we can go through all the divisors $m$ of the exponent $420$ of $S_7$ to obtain the elements $\ol\mu_m$ and the indicators for the three irreducible representations of $S_3$. The table \cref{fig:4657} calculates $\ol\mu_m$ in two stages, giving first an ``unsimplified'' version of $\pi(T^{[m]})$ in an attempt to hint at how this intermediate result can really be read off quite directly from the expression for $T$ obtained above.
\begin{figure}\caption{Indicator calculations on $\operatorname{Im}(\Indtobim_{(46)(57)})\subset\HM S_7..S_5.S_5.$}\label{fig:4657}
\begin{sideways}
\begin{minipage}{\textheight}
\begin{align*}
  \begin{array}[c]{|r|c|c|c|c|c|}
    \hline
    {m}&{\pi(T((46)(57))^{[m]})}
&{\ol\mu_m((46)(57))}&\multicolumn{3}{|c|}{\nu_m(\Indtobim_{(46)(57)}(W_j))}\\
&&&W_0&W_1&W_2\\
    \hline\hline
    2&1+3+2(123)&\frac13(2+(123))&1&1&1\\
\hline
    3&6&1&1&1&2\\
\hline
    4&1+3+1+2(123)+12+3+2(123)&\frac13(10+2(123))&4&4&6\\
\hline
    5&6+6(12)&1+(12)&2&0&2\\
\hline
    6&1+3+6+2+6+6+12+6&7&7&7&14\\
\hline
    7&12+12&4&4&4&8\\
\hline
    10&1+3+2(123)+6+12+6&\frac13(14+(123))&5&5&9\\
\hline
    12&1+3+6+1+2+12+6+3+6+12+6+12+2&12&12&12&24\\
\hline    
    14&1+3+2(123)+12+12&\frac13(14+(123))&5&5&9\\
\hline
    15&6+6+6(12)&2+(12)&3&1&4\\
\hline
    20&1+3+1+2(123)+12+6+3+12+6+2(123)&\frac13(22+2(123))&8&8&14\\
\hline
    21&6+12+12&5&5&5&10\\
\hline
    28&1+3+1+12+3+2(123)+12+12&\frac13(22+2(123))&8&8&14\\
\hline
    30&1+3+6+2+6+6+6+12+12+6+6&11&11&11&22\\
\hline
    35&6+6(12)+12+12&5+(12)&6&4&10\\
\hline
    42&1+3+6+2+6+6+12+6+12+12&11&11&11&22\\
\hline
    60&1+3+6+1+2+12+6+6+3+6+12+12+6+6+12+2&16&16&16&32\\
\hline
    70&1+3+2(123)+6+12+6+12+12&\frac13(26+(123))&9&9&17\\
\hline
    84&1+3+6+1+2+12+6+3+6+12+6+12+2+12+12&16&16&16&32\\
\hline
    105&6+6+6(12)+12+12&6+(12)&7&5&12\\
\hline
    140&1+3+1+2(123)+12+6+3+12+6+2(123)+12+12&\frac13(34+2(123))&12&12&22\\
\hline
    210&1+3+6+2+6+6+6+12+12+6+6+12+12&15&15&15&30\\
\hline
    420&&20&20&20&40\\
    \hline
  \end{array}
\end{align*}
\end{minipage}
\end{sideways}
\end{figure}

For good measure, we shall also finish the calculations for $d_2=(56)$ and $d_5=(567).$ In each case $\Stab_{S_5}(d_iS_5)=S_4$, and
\begin{align*}
  Q(d_i)=&()+6(12)+4(15)\\
         &+8(123)+12(125)\\
         &+3(12)(34)+12(12)(35)\\
         &+6(1234)+24(1235)\\
         &+8(123)(45)+12(125)(34)\\
         &+24(12345),
\end{align*}
thus 
\begin{align*}
  T((56))=&(56)+6(12)(56)+4(165)\\
          &+8(123)(56)+12(1265)\\
          &+3(12)(34)(56)+12(12)(365)\\
          &+6(1234)(56)+24(12365)\\
          &+8(123)(465)+12(1265)(34)\\
          &+24(123465)
\end{align*}
and
\begin{align*}
  T((567))=&(567)+6(12)(567)+4(1675)\\
           &+8(123)(567)+12(12675)\\
           &+3(12)(34)(567)+12(12)(3675)\\
           &+6(1234)(567)+24(123675)\\
           &+8(123)(4675)+12(12675)(34)\\
           &+24(1234675).
\end{align*}
Thus we obtain the elements $\ol\mu_m((56))$ and $\ol\mu_m((567))$ listed in \cref{fig:mubars}.
\begin{figure}\caption{$\ol\mu_m((56)),\ol\mu_m((567))\in \CC S_4$ for indicators in $\HM S_7..S_5.S_5.$}\label{fig:mubars}
\begin{align*}
  \begin{array}[c]{|r|c|c|}
    \hline
    m&\ol\mu_m((56))&\ol\mu_m((567))\\
    \hline
    2&\frac1{14}(5+4(123)+3(12)(34))&0\\
\hline
    3&\frac12(1+(12))&\frac18(3+2(12)+(12)(34)+2(1234))\\
\hline
    4&\frac13(5+(123))&\frac16(4+2(123))\\
    \hline
    5&1&\frac12(1+(12))\\
    \hline
    6&\frac14(11+(12)(34))&\frac14(7+(12)(34))\\
    \hline
    7&0&1\\
    \hline
    10&\frac1{12}(17+4(123)+3(12)(34))&1\\
    \hline
    12&4&3\\
    \hline
    14&\frac1{12}(5+4(123)+3(12)(34))&1\\
    \hline
    15&\frac12(3+(12))&\frac18(7+6(12)+(12)(34)+2(1234))\\
    \hline
   20&\frac13(8+(123))&\frac16(10+2(123))\\
\hline 21&\frac12(1+(12))&\frac18(11+2(12)+(12)(34)+2(1234))\\
    \hline
    28&\frac16(10+2(123))&\frac13(5+(123))\\
    \hline
    30&\frac14(15+(12)(34))&\frac14(11+(12)(34))\\
    \hline
    35&1&\frac12(3+(12))\\
    \hline
    42&\frac14(11+(12)(34))&\frac14(11+(12)(34)\\
    \hline
    60&5&4\\
    \hline
    70&\frac1{12}(17+4(123)+3(12)(34))&2\\
    \hline
    84&4&4\\
    \hline
    105&\frac12(3+(12))&\frac18(15+6(12)+(12)(34)+2(1234))\\
    \hline
    210&\frac14(15+(12)(34))&\frac14(15+(12)(34))\\
    \hline
    420&5&5\\
    \hline
  \end{array}
\end{align*}
\end{figure}
From the information in \cref{fig:mubars} and the following character table of $S_4$
\begin{align*}
  \begin{array}[c]{|r|r|r|r|r|r|}
    \hline
    &()&(12)&(123)&(12)(34)&(1234)\\
\hline
    \eta_0&1&1&1&1&1\\
    \eta_1&1&-1&1&1&-1\\
    \eta_2&2&0&-1&2&0\\
    \eta_3&3&1&0&-1&-1\\
    \eta_4&3&-1&0&-1&1\\
    \hline
  \end{array}
\end{align*}
one can then calculate all the indicator values for the simples in the images of $\Indtobim_{(56)}$ and $\Indtobim_{(567)}$; see \cref{fig:56567}.

\begin{figure}\caption{Indicators on $\operatorname{Im}(\Indtobim_{(56)}),\operatorname{Im}(\Indtobim_{(567)})\subset\HM S_7..S_5.S_5.$}\label{fig:56567}
\begin{align*}
  \begin{array}[c]{|r||c|c|c|c|c||c|c|c|c|c|}
    \hline
    m&\multicolumn{5}{|c||}{\nu_m(\Indtobim_{(56)}(W_i))}&\multicolumn{5}{|c|}{\nu_m(\Indtobim_{(567)}(W_i))}\\
     &W_0&W_1&W_2&W_3&W_4&W_0&W_1&W_2&W_3&W_4\\
    \hline
    2&1&1&1&1&1&0&0&0&0&0\\
\hline
    3&1&0&1&2&1&1&0&1&1&1\\
\hline
    4&2&2&3&5&5&1&1&1&2&2\\
    \hline
    5&1&1&2&3&3&1&0&1&2&1\\
    \hline
    6&3&3&6&8&8&2&2&4&5&5\\
    \hline
    7&0&0&0&0&0&1&1&2&3&3\\
    \hline
    10&2&2&3&4&4&1&1&2&3&3\\
    \hline
    12&4&4&8&12&12&3&3&6&9&9\\
    \hline
    14&1&1&1&1&1&1&1&2&3&3\\
    \hline
    15&2&1&3&5&4&2&0&2&3&2\\
    \hline
   20&3&3&5&8&8&2&2&3&5&5\\
\hline 21&1&0&1&2&1&2&1&3&4&4\\
    \hline
    28&2&2&3&5&5&2&2&3&5&5\\
    \hline
    30&4&4&8&11&11&3&3&6&8&8\\
    \hline
    35&1&1&2&3&3&2&1&3&5&4\\
    \hline
    42&3&3&6&8&8&3&3&6&8&8\\
    \hline
    60&5&5&10&15&14&4&4&8&12&12\\
    \hline
    70&2&2&3&4&4&2&2&4&6&6\\
    \hline
    84&4&4&8&12&12&4&4&8&12&12\\
    \hline
    105&2&1&3&5&4&3&1&4&6&5\\
    \hline
    210&4&4&8&11&11&4&4&8&11&11\\
    \hline
    420&5&5&10&15&15&5&5&10&15&15\\
    \hline
  \end{array}
\end{align*}
\end{figure}

The piece of GAP \cite{GAP4} code in~\cref{fig:GAP} can be used to calculate the higher indicators for objects in $\HM G..H.H.$ for any finite group $G$ and subgroup $H$ available to GAP. It uses the simple but inefficient formula \eqref{eq:14}. Moreover it is written in the most straightforward manner, makes hardly any attempt to reduce the load of calculations, and blindly repeats the same steps several times instead. We do not pursue for the moment the quest to write better code (storing intermediate results such as the elements $\mu_m$ instead of recalculating them for each representation), nor the task to make use of the improved formula in \cref{prop:betterorbits} to speed up matters. The clumsy code is sufficient to do any of the calculations done above ``by hand'' again in seconds. Thus it \emph{could} have been used to verify these results \emph{if} the author \emph{had} had any reason to mistrust his capability to perform flawless computations. Also, \emph{if} the original calculations \emph{had} contained errors, the GAP code \emph{could} have been used to track those down and possibly correct them.

As it stands, the code was also sufficiently efficient to check that the inclusions $S_6\subset S_8$ as well as $S_7\subset S_9$ continue to produce only nonnegative indicator values.
\lstset{language=GAP}
\begin{figure}\caption{GAP code to compute indicators in $\HM G..H.H.$}\label{fig:GAP}
\lstinputlisting{GAPcode.gap}
\end{figure}

\clearpage
\printbibliography
\end{document}